\renewcommand*\backref[1]{}
\renewcommand*\backrefalt[4]{}
\theoremstyle{plain}
\newtheorem{thm}{Theorem}[section]
\newtheorem*{thm*}{Theorem}
\newtheorem{cor}[thm]{Corollary}
\newtheorem{lem}[thm]{Lemma}
\theoremstyle{definition}
\newtheorem{rem}{Remark}[section]
\numberwithin{equation}{section}
\newtheoremstyle{citing}
{3pt}
{3pt}
{\itshape}
{}
{\bfseries}
{.}
{.5em}
{\thmnote{#3}}
\theoremstyle{citing}
\DeclareMathOperator{\Hm}{\mathcal{H}}
\DeclareMathOperator{\diam}{diam}
\begin{document}
	
\title{Random covering sets in metric space with exponentially mixing property}

\author{Zhang-nan Hu}
\author{Bing Li*}\thanks{* Corresponding author}
\address{School of Mathematics, South China University of Technology, Guangzhou, China, 510641}
\email{hnlgdxhzn@163.com, scbingli@scut.edu.cn}

\begin{abstract}
	Let $\{B(\xi_n,r_n)\}_{n\ge1}$ be a sequence of random balls whose centers $\{\xi_n\}_{n\ge1}$ is a stationary process, and $\{r_n\}_{n\ge1}$ is a sequence of positive numbers decreasing to 0. Our object is the random covering set $E=\limsup\limits_{n\to\infty}B(\xi_n,r_n)$, that is, the points covered by $B(\xi_n,r_n)$ infinitely often. The sizes of $E$ are investigated from the viewpoint of measure, dimension and topology.
\end{abstract}

\keywords{Random covering sets, exponentially mixing, dynamical system, metric space, Hausdorff dimension}

\maketitle
	
\section{Introduction}

Let $(X,d)$ be a complete metric space. Given a sequence of points $\{x_n\}_{n\ge1}$ in $X$, let $\{r_n\}_{n\ge1}$ be a sequence of positive numbers decreasing to 0. A general covering problem concerns the sets 
\[\limsup\limits_{n\to\infty}B(x_n,r_n)=\bigcap_{k=1}^{\infty}\bigcup_{n=k}^{\infty}B(x_n,r_n),\]
where $B(x_n,r_n)$ denotes the ball centered at $x_n$ of radius $r_n$. Many authors have investigated the size and structure of these limsup sets. 

One of classical models is to let $\{x_n\}_{n\ge1}$ be a sequence of random variables on $(\Omega,\mathcal{B},\mathbb{P})$. The limsup set $E=\bigcap_{k=1}^{\infty}\bigcup_{n=k}^{\infty}B(x_n,r_n)$ is usually called random covering set, since it consists of the points covered by random balls $\{B(x_n,r_n)\}$ infinitely many times. The study of random covering sets has a long and convoluted history~\cite{Durand10,Eks19,EJS18,Fan02,LSX13,LS14,Per15}. 

In 1956, Dvoretzky \cite{Dvor56} called the attention on the study of such random covering sets in the circle $\mathbb{T}$ with $\{x_n\}_{n\ge1}$ being independent and uniformly distributed. He asked the question when $E=\mathbb{T}$ a.s. or not. There was a series of contributions.  In 1971, L. Shepp~\cite{Shepp72} gave a sufficient and necessary condition: $E=\mathbb{T}$ a.s. if and only if $\sum_{n=1}^{\infty}(1/n^2)\exp(r_1+\dots+r_n)=\infty$. Kahane \cite{Kahane85} proved that $E$ is a.s. dense on $\mathbb{T}$ and moreover of second category. The applications of the Borel-Cantelli lemma and Fubini's theorem give that the Lebesgue measure of $E$ is 0 or 1 a.s. according to the convergence or divergence of the series $\sum_{n=1}^{\infty}r_n$. Many authors have studied the Hausdorff dimension and other fractal properties of random covering set $E$. Fan and Wu \cite{FanW04} considered the special case $r_n=a/{n^{\alpha}}$ with $a>0$ and $\alpha>1$. They proved that $\dim_{\rm{H}} (E)=1/{\alpha}$ a.s., where $\dim_{\rm{H}}$ denotes Hausdorff dimension. Durand \cite{Durand10} considered a general sequence $\{r_n\}_{n\ge1}$ and proved $\dim_{\rm{H}}E=\inf\{s>0\colon \sum_{n=1}^{\infty}r_n^s<\infty\}$ and $\dim_{\rm{P}} E=1$ a.s., where $\dim_{\rm{P}}$ denotes packing dimension. 

Many variations of the random covering problems have been addressed by many mathematicians. For example, J\"arvenp\"a\"a, $et~al.$ \cite{JKLS14} covered the torus by self-affine sets instead of balls. Feng, $et~al.$ \cite{FJV18} extended it to any open sets. The covering model in Ahlfors regular metric space has been studied in \cite{JKLSX17}.

Instead of a sequence of random variables, many authors considered the case that $\{x_n\}_{n\ge1}$ is the orbit of a dynamical system $(X,T)$. Define the dynamical covering set $E(x,r_n)=\bigcap_{k=1}^{\infty}\bigcup_{n=k}^{\infty}B(T^{n-1}x,r_n).$
Fan, Schmeling and Troubetzkoy~\cite{FanScT13} computed the Hausdorff dimension of $E(x,r_n)$ when $X$ is the unit interval and $T\colon x\mapsto 2x$ $(\bmod\thinspace1)$. In 2013, Liao and Seuret~\cite{LiaSe13} successfully enlarged the setting to finite expanding Markov map with Gibbs measure $m$. They proved that if $r_n=n^{-\alpha}$, then $\dim_{\rm{H}} E(x,r_n)=1/\alpha$ a.e. provided that $1/\alpha$ is not larger than the dimension of the measure $m$. Pesson and Rams \cite{PR17} considered more general piecewise expanding maps than Markov maps. In 2017, Wang, Wu and Xu~\cite{WanWuX17} considered the dynamical covering problem on the triadic cantor set.

In this paper, we consider the covering set with $\{\xi_n\}_{n\ge1}$ which is a sequence of points in a compact metric space $(X,d)$, chosen randomly. And independence of $\{\xi_n\}_{n\ge1}$ is not necessary. The purpose of this article is to study some properties of random covering set in general probabilistic setting, including measure, density, fractal dimensions and so on. Next we will state the main results and provide some discussions. Section 2 is devoted to the proof of the results. In the last section 3, we will give an application to the dynamical covering problem.

Let $\{\xi_n\}_{n\ge1}$ be a stationary process on a probability space $(\Omega,\mathcal{B},\mathbb{P})$ and take values in a compact metric space $(X,d)$. Let $\mu$ be the probability measure defined by 
\begin{equation}\label{eqmu}
\mu(A)=\mathbb{P}(\xi_1\in A)
\end{equation}
for any Borel set $A \subset X$. Assume that $X$ is the support of $\mu$.

We say that $\{\xi_n\}_{n\ge1}$ is $exponentially~ mixing$ if for any $n\ge1$, there exist two constants $c>0$ and $0<\gamma<1$ such that  
\[\big|\mathbb{P}(\xi_1\in A|D)-\mathbb{P}(\xi_1\in A)\big|\le c\gamma^{n}\]
holds for any ball $A\subset X$ and $D\in\mathcal{B}^{n+1}$, where $\mathcal{B}^{n+1}$ is the sub-$\sigma$-field generated by $\{\xi_{n+i}\}_{i\ge1}$.

Recall that a Borel measure $\mu$ is $\textit{Ahlfors}$ $s$-$\textit{regular}$ $(0< s < \infty)$ if there exists a constant $0 < C <\infty$ such that 
\begin{equation}\label{eqah}
C^{-1}r^s\le\mu\big(B(x,r)\big)\le Cr^s
\end{equation} 
holds for all $x\in X$ and $0<r\le\diam X$, where $\diam X$ is the diameter of X. 

A metric space $X$ is said to be $\textit{Ahlfors}$ $s$-$\textit{regular}$ if there exists a Borel measure on $X$ satisfying formula \eqref{eqah}.

Let $\{r_n\}_{n\ge1}$ be a sequence of positive real numbers decreasing to zero. For every $n\ge 1$, denote $B_n:=B(\xi_n,r_n)$. Define 
\[E:=\limsup_{n\to\infty}B_n=\{y\in X\colon y\in B_n ~\text{for~ infinitely ~many}~n\ge1\}.\]
The set $E$ is a $random ~covering~ set$ and consists of the points which are covered by $\{B_n\}_{n\ge1}$ infinitely often (i.o. for short).

\begin{thm}\label{thm1}
	Let $\{\xi_n\}_{n\ge1}$ be exponentially mixing and the probability measure $\mu$ defined in~\eqref{eqmu} be Ahlfors s-regular. Then we have
	\[\mu\bigl(E\bigr)=
	\begin{cases}
	0&  \text{if $\sum_{n=1}^{\infty}r_n^s<\infty$}\\
	1&  \text{if $\sum_{n=1}^{\infty}r_n^s=\infty$}
	\end{cases} \quad a.s.\]
\end{thm}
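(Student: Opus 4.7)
The plan is to apply Fubini--Tonelli and carry out a pointwise Borel--Cantelli argument for each $y\in X$. By stationarity, for every $y\in X$ and every $n\ge 1$,
\[
\mathbb{P}(y\in B_n)=\mathbb{P}(\xi_n\in B(y,r_n))=\mu(B(y,r_n)),
\]
which by the Ahlfors $s$-regularity of $\mu$ is comparable to $r_n^s$. Tonelli gives $\mathbb{E}[\mu(E)]=\int_X \mathbb{P}(y\in E)\,d\mu(y)$, so the theorem reduces to showing pointwise in $y$ that $\mathbb{P}(y\in E)$ takes the correct value in $\{0,1\}$.

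When $\sum r_n^s<\infty$, the upper Ahlfors bound gives $\sum_n\mathbb{P}(y\in B_n)\le C\sum_n r_n^s<\infty$, so the convergence Borel--Cantelli lemma yields $\mathbb{P}(y\in E)=0$ for every $y$, and Fubini gives $\mu(E)=0$ a.s.

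For the divergent case, the lower Ahlfors bound gives $S_N:=\sum_{n\le N}\mathbb{P}(y\in B_n)\to\infty$. The events $\{y\in B_n\}$ are dependent, so I would apply the Kochen--Stone version of the second Borel--Cantelli lemma, which requires bounding the quadratic sum $T_N:=\sum_{n,m\le N}\mathbb{P}(y\in B_n\cap B_m)$. For $n<m$, stationarity rewrites the joint probability as $\mathbb{P}(\xi_1\in B(y,r_n),\,\xi_{m-n+1}\in B(y,r_m))$, and the event $\{\xi_{m-n+1}\in B(y,r_m)\}$ lies in $\mathcal{B}^{m-n+1}$. The exponential mixing hypothesis, applied with $A=B(y,r_n)$ and $D=\{\xi_{m-n+1}\in B(y,r_m)\}$, yields the decoupling estimate
\[
\bigl|\mathbb{P}(y\in B_n\cap B_m)-\mathbb{P}(y\in B_n)\mathbb{P}(y\in B_m)\bigr|\le c\gamma^{m-n}\mu(B(y,r_m)).
\]
Summing pairs $n<m\le N$ bounds the mixing error contribution by $2c\bigl(\gamma/(1-\gamma)\bigr)S_N=O(S_N)$, so $T_N\le S_N^2+O(S_N)=S_N^2(1+o(1))$. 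Kochen--Stone then gives $\mathbb{P}(y\in E)\ge \limsup_N S_N^2/T_N=1$ for every $y\in X$, and Fubini delivers $\mu(E)=1$ a.s.

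The main subtlety is the decoupling estimate in the divergent case. The mixing hypothesis is phrased as a one-sided bound on conditional probabilities with respect to the tail $\sigma$-algebra $\mathcal{B}^{n+1}$ rather than as a symmetric mixing bound on joint distributions, so one must use stationarity to put the earlier ball in the role of ``$A$'' and express the later ball as an event in the conditioning $\sigma$-algebra. The extra factor $\mu(B(y,r_m))$ in the error bound is crucial: without it the aggregated mixing error would be $O(N)$ rather than $O(S_N)$, and when $\sum_n r_n^s$ diverges only slowly this would dominate the desired $S_N^2$ main term and break the Kochen--Stone asymptotics.
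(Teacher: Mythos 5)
Your proposal is correct and follows essentially the same route as the paper: the convergence half is the first Borel--Cantelli lemma plus Ahlfors regularity, and the divergence half combines stationarity with the exponential mixing bound to control the pairwise joint probabilities, then applies a second-moment Borel--Cantelli argument pointwise in $y$ and integrates with Fubini. The only cosmetic difference is that you invoke Kochen--Stone where the paper runs the Paley--Zygmund inequality on $S_N=\sum_{n\le N}\chi_{\{y\in B_n\}}$ and lets $\lambda\to0$; the key decoupling estimate, including the crucial factor $\mu(B(y,r_m))$ in the error term, is identical.
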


A dimension function $f\colon \mathbb{R}^{+}\to\mathbb{R}^{+}$ is a continuous and non-decreasing function such that $f(r)\to0$ as $r\to0$. If there exists a constant $\eta>1$ such that for $r>0$, $f(2r)\le\eta f(r)$, then we say that function $f$ is $doubling$.

\begin{thm}\label{thm2}
	Let $\{\xi_n\}_{n\ge1}$ be exponentially mixing and the probability measure $\mu$ defined in~\eqref{eqmu} be Ahlfors s-regular. Suppose that $f$ is a doubling dimension function with $f(r)/r^s$ being nondecreasing as $r\to0$. Then, with probability one, for any ball $B$ of $X$,
\[\Hm^f(E\cap B)=
\begin{cases}
	0&  \text{if $\sum_{n=1}^{\infty}f(r_n)<\infty$}\\
	\Hm^f(B)&  \text{if $\sum_{n=1}^{\infty}f(r_n)=\infty$}
	\end{cases} \quad. \]
	Futhermore,
	\[\dim_{\rm{H}} E=\alpha \quad a.s.,\]
	where $\alpha=\inf\{t\le s\colon \sum_{n=1}^{\infty}r_n^t<\infty\}$.
\end{thm}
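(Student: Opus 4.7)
My plan is to establish the $\Hm^f$-dichotomy first and then derive the Hausdorff dimension formula as a corollary.

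\textbf{The convergent case.} When $\sum_n f(r_n)<\infty$, a deterministic covering argument suffices. For every $k\ge1$, the inclusion $E \subset \bigcup_{n\ge k} B_n$ holds by definition of the limsup, and this family of balls is an admissible cover of $E\cap B$ whose mesh $\delta_k := 2\sup_{n\ge k}r_n$ tends to $0$. The doubling property of $f$ yields $f(\diam B_n) \le f(2r_n)\le \eta f(r_n)$, hence
\[
\Hm^f_{\delta_k}(E\cap B) \;\le\; \eta\sum_{n\ge k} f(r_n) \;\longrightarrow\; 0
\]
as $k\to\infty$, giving $\Hm^f(E\cap B)=0$ for every ball $B$.

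\textbf{The divergent case.} I would apply the mass transference principle (MTP) of Beresnevich--Velani (in the form valid on Ahlfors $s$-regular metric spaces) to the \emph{enlarged} balls $\tilde B_n := B(\xi_n,\tilde r_n)$ with $\tilde r_n := f(r_n)^{1/s}$. These radii decrease to $0$ (since $f$ is a continuous non-decreasing dimension function with $f(0^+)=0$), and the key identity
\[
\sum_{n\ge1} \tilde r_n^s \;=\; \sum_{n\ge1} f(r_n) \;=\; \infty
\]
puts Theorem~\ref{thm1} into play for the sequence $\tilde r_n$, yielding $\mu(\limsup \tilde B_n) = 1$ almost surely. On this probability-one event, the trivial inequality $\mu(B\cap \limsup \tilde B_n) \ge \mu(B) - \mu(X\setminus \limsup \tilde B_n) = \mu(B)$ upgrades the global full-measure statement to the local one $\mu(B\cap \limsup \tilde B_n) = \mu(B)$ for \emph{every} ball $B$ simultaneously, and Ahlfors $s$-regularity converts this into $\Hm^s(B\cap \limsup \tilde B_n) = \Hm^s(B)$. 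This is exactly the hypothesis of MTP, whose conclusion, $\Hm^f(E\cap B) = \Hm^f(B)$ for every ball $B$ almost surely, is what we want.

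\textbf{The Hausdorff dimension.} Specialising the dichotomy to $f(r)=r^t$ for $t\in(0,s]$ (which satisfies the hypothesis since $r^{t-s}$ is nondecreasing as $r\to0$ whenever $t\le s$) gives: for $\alpha<t\le s$, $\sum r_n^t<\infty$ and hence $\Hm^t(E)=0$; for $0<t<\alpha$, $\sum r_n^t=\infty$ yields $\Hm^t(E)=\Hm^t(X)=\infty$ (the latter because $t<s$ and $X$ is $s$-Ahlfors regular). The trivial bound $\hdim E\le s$ handles $t>s$. Together these yield $\hdim E = \alpha$ almost surely. The chief obstacle I foresee is securing an MTP in this metric-measure setting with non-independent centres, but the random structure of $\{\xi_n\}$ enters only through the conclusion of Theorem~\ref{thm1}; once that statement is in hand, the MTP step is purely deterministic, so an Ahlfors-regular version of the Beresnevich--Velani principle is enough.
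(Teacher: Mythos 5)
Your proposal is correct and follows essentially the same route as the paper: a direct covering estimate for the convergent case (the paper uses the monotonicity of $f(r)/r^s$ to get $f(2r_n)\le 2^s f(r_n)$ where you use doubling, an immaterial difference), and for the divergent case an application of Theorem~\ref{thm1} to the balls $B(\xi_n, f(r_n)^{1/s})$ followed by the Beresnevich--Velani mass transference principle. You also spell out two steps the paper leaves implicit --- the passage from $\mu(\limsup \tilde B_n)=1$ to the $\Hm^s$-hypothesis of the MTP, and the deduction of $\dim_{\rm H}E=\alpha$ by specialising to $f(r)=r^t$ --- both of which are handled correctly.
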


\begin{thm}\label{thm3}
	Let $\{\xi_n\}_{n\ge1}$ be exponentially mixing and the probability measure $\mu$ be Ahlfors~ s-regular. Then random covering set $E$ is dense in X almost surely. 
\end{thm}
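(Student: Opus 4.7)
My plan is to write $E$ as a countable intersection of open sets and then apply Baire's category theorem. For each $K \ge 1$ the set $U_K := \bigcup_{n \ge K} B_n$ is open as a union of open balls, and by definition of $\limsup$ one has $E = \bigcap_{K \ge 1} U_K$. Since the compact metric space $X$ is complete, hence a Baire space, to prove almost sure density of $E$ it suffices to show that for every fixed $K$, the random open set $U_K$ is almost surely dense in $X$.

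Since $X$ is separable, fix a countable base $\{V_j\}_{j \ge 1}$ of non-empty open balls. Density of every $U_K$ is equivalent to $U_K \cap V_j \neq \emptyset$ for every $j$ and $K$, and intersecting over these countably many events reduces the whole problem to: for a single ball $V_0 = B(y_0, \rho_0)$, almost surely $B_n \cap V_0 \neq \emptyset$ holds for infinitely many $n$. Observing that $\xi_n \in V_0$ already implies $\xi_n \in B_n \cap V_0$, it is enough to prove that the events $\{\xi_n \in V_0\}$ occur infinitely often almost surely. Stationarity and Ahlfors $s$-regularity give
\[\mathbb{P}(\xi_n \in V_0) = \mu(V_0) \ge C^{-1}\rho_0^s > 0,\]
so in particular $\sum_n \mathbb{P}(\xi_n \in V_0) = \infty$.

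The main obstacle is turning this divergent expected count into almost sure infinite occurrence; this is a second Borel--Cantelli type statement under exponential mixing. A clean route is to first show that the tail $\sigma$-field $\bigcap_{n} \mathcal{B}^{n+1}$ is trivial: for any tail event $T$ and any ball $A \subset X$, the mixing hypothesis gives $|\mathbb{P}(\xi_1 \in A, T) - \mathbb{P}(\xi_1 \in A)\mathbb{P}(T)| \le c\gamma^n \mathbb{P}(T)$ for every $n$, so $\xi_1$ is independent of $T$; stationarity extends this to each $\xi_k$, whence $T$ is independent of $\sigma(\xi_1,\xi_2,\ldots) \supseteq T$ and $\mathbb{P}(T) \in \{0,1\}$. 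Since $\{\xi_n \in V_0 \text{ i.o.}\}$ is a tail event and the occupation sum has infinite expectation, its probability must equal $1$. (Alternatively, one can bound the pair correlations $\mathbb{P}(\xi_i \in V_0, \xi_j \in V_0) \le \mu(V_0)^2 + c\gamma^{|j-i|-1}\mu(V_0)$ and feed them into the Kochen--Stone inequality.) This is essentially the same estimate that drives Theorem~\ref{thm1}, so it sits inside the machinery already developed. Applying Baire's theorem to the countable family of almost surely dense open sets $U_K$ then yields that $E$ is almost surely dense in $X$.
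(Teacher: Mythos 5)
Your proof is correct, but only via your parenthetical ``alternative'' route; and the overall assembly is genuinely different from the paper's. The paper fixes a countable base of balls $B=B(a,r)$, applies its Lemma~\ref{lem3} to the events $\{\xi_n\in B(a,r+r_n)\}$ (whose radii form a decreasing sequence with divergent $s$-th power sum), and concludes $E\cap B\ne\varnothing$ a.s.\ for every basic ball. You instead write $E=\bigcap_K U_K$ with $U_K=\bigcup_{n\ge K}B_n$ open, prove each $U_K$ is a.s.\ dense by showing $\{\xi_n\in V_0\}$ occurs i.o.\ for each basic ball $V_0$, and invoke Baire. Your route is arguably cleaner at one delicate point: the paper passes from ``$B_n\cap B\ne\varnothing$ i.o.'' to ``$E\cap B\ne\varnothing$'' in one step, whereas these events need not coincide (the balls $B_n$ meeting $B$ could shrink onto a boundary point of $B$); working with the open tails $U_K$ and Baire sidesteps this entirely, and as a bonus exhibits $E$ as a countable intersection of dense open sets, i.e.\ it proves Theorem~\ref{thm4} (residuality) at the same time. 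The probabilistic core is the same in both proofs: your Kochen--Stone/pair-correlation argument is exactly Lemma~\ref{lem3} applied with the constant radii $h_n=\rho_0$ (permitted, since the lemma only asks that $\{h_n\}$ be decreasing and $\sum h_n^s=\infty$).

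Be aware, however, that your \emph{primary} route via tail triviality has a gap as sketched. The stated mixing hypothesis controls only single-coordinate events $\{\xi_1\in A\}$ against the future $\sigma$-field, so your argument yields that each $\xi_k$ separately is independent of a tail event $T$; this does not imply that the joint $\sigma$-field $\sigma(\xi_1,\xi_2,\dots)$ is independent of $T$, which is what you need to conclude $\mathbb{P}(T)=\mathbb{P}(T)^2$. Moreover, even granting a zero--one law, ``the occupation sum has infinite expectation'' does not by itself rule out probability $0$ for the limsup event; what saves you here is $\limsup_n\mathbb{P}(\xi_n\in V_0)=\mu(V_0)>0$ together with reverse Fatou. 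Since your alternative (Kochen--Stone) route is complete and is precisely the paper's Lemma~\ref{lem3}, the proof stands, but you should promote that route to the main argument.
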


\begin{cor}\label{cor1}
	Assume that the conditions of Theorem~\ref{thm2} hold. We have $\dim_{\rm{B}} E=s$ almost surely, where $\dim_{\rm{B}}$ denotes box dimension.
\end{cor}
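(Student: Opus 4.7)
The plan is to sandwich $\dim_{\rm{B}} E$ between $s$ and $s$ by combining the trivial upper bound with a density argument coming from Theorem~\ref{thm3}.

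For the upper bound, note that $E\subset X$ and $X$ is compact and Ahlfors $s$-regular. The Ahlfors regularity condition \eqref{eqah} forces the $\delta$-covering number $N_\delta(X)$ to satisfy $N_\delta(X)\asymp\delta^{-s}$ for small $\delta$, so $\dim_{\rm{B}} X=s$. Monotonicity of box dimension under inclusion then yields $\dim_{\rm{B}} E\le\dim_{\rm{B}} X=s$ deterministically.

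For the lower bound, I would invoke Theorem~\ref{thm3}: almost surely $E$ is dense in $X$, i.e.\ $\overline{E}=X$. A standard fact about box dimension is that $N_\delta(E)=N_\delta(\overline{E})$ for every $\delta>0$ (any $\delta$-cover of $E$ by closed balls automatically covers $\overline{E}$, and vice versa), hence $\dim_{\rm{B}} E=\dim_{\rm{B}} \overline{E}=\dim_{\rm{B}} X=s$. Combining with the upper bound gives the conclusion.

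The only thing requiring any care is the equality $\dim_{\rm{B}} \overline{E}=\dim_{\rm{B}} E$; this is routine and well-documented, and does not interact with the probabilistic structure. Note that the full strength of Theorem~\ref{thm2} is not required here: only Theorem~\ref{thm3} and the Ahlfors regularity of $\mu$ (which gives $\dim_{\rm{B}} X=s$) enter the argument. In particular, this matches nicely with the Hausdorff bound $\dim_{\rm{H}} E=\alpha\le s$ from Theorem~\ref{thm2}, so that when $\alpha<s$ the set $E$ provides an explicit example where Hausdorff and box dimension differ.
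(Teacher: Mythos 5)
Your proof is correct and follows essentially the same route as the paper: first establish $\dim_{\rm{B}} X=s$ from Ahlfors regularity (the paper does this by counting a maximal $r$-separated set, you by estimating covering numbers --- equivalent computations), then transfer this to $E$. Your write-up is actually more careful on the second step: the paper concludes by citing Theorem~\ref{thm2} for $\dim_{\rm{B}} E=\dim_{\rm{B}} X$, but since Theorem~\ref{thm2} only yields $\dim_{\rm{H}}E=\alpha$, which may be strictly less than $s$, the lower bound genuinely rests on Theorem~\ref{thm3} (almost sure density of $E$) combined with the invariance of box dimension under closure, which is exactly the argument you supply.
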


Recall that a set is called $residual$ if the complement of the set is a first category set.

\begin{thm}\label{thm4}
	Let $\{\xi_n\}_{n\ge1}$ be exponentially mixing and the probability measure $\mu$ be Ahlfors~ s-regular. Then random covering set $E$ is a residual set almost surely. And $E$ is also a set of second category almost surely. In particular $\dim_{\rm{P}} E=s$ almost surely.
\end{thm}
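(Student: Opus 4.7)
The plan is to leverage the open structure of the approximating sets together with the density result from Theorem~\ref{thm3}. Write $E=\bigcap_{k\ge1}U_k$ where $U_k:=\bigcup_{n\ge k}B(\xi_n,r_n)$, so each $U_k$ is open, and since $E\subset U_k$, the almost-sure density of $E$ (Theorem~\ref{thm3}) propagates to each $U_k$. Consequently the complement $X\setminus E=\bigcup_{k\ge1}(X\setminus U_k)$ is a countable union of closed nowhere-dense sets and is therefore first category, so $E$ is residual almost surely. The second-category statement is then immediate from Baire: the compact metric space $X$ cannot itself be first category, and if $E$ were first category so would $E\cup(X\setminus E)=X$ be.

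For the packing-dimension claim, the inequality $\dim_{\rm{P}}E\le\overline{\dim_{\rm{B}}}E$ combined with Corollary~\ref{cor1} yields $\dim_{\rm{P}}E\le s$ almost surely. For the reverse inequality I would use the characterization of packing dimension as the modified upper box dimension,
\[\dim_{\rm{P}}E=\inf\Bigl\{\sup_{i\ge1}\overline{\dim_{\rm{B}}}F_i:\;E\subset\bigcup_{i\ge1}F_i\Bigr\}.\]
Given any countable cover $E\subset\bigcup_i F_i$, the second-category property of $E$ rules out every $F_i$ being nowhere dense, so for some $i_0$ the closure $\overline{F_{i_0}}$ contains an open ball $B\subset X$. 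Ahlfors $s$-regularity of $\mu$ forces $\overline{\dim_{\rm{B}}}B=s$, and since upper box dimension is closure-invariant, $\overline{\dim_{\rm{B}}}F_{i_0}=\overline{\dim_{\rm{B}}}\overline{F_{i_0}}\ge s$. Taking the infimum gives $\dim_{\rm{P}}E\ge s$.

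The main substantive step is this lower bound on $\dim_{\rm{P}}E$, where a topological fact (second category) must be converted into a metric-geometric statement via Ahlfors regularity. The only remaining care is to work throughout on the intersection of the full-probability events provided by Theorem~\ref{thm3} and Corollary~\ref{cor1}, on which the density of $E$ and the equality $\overline{\dim_{\rm{B}}}E=s$ both hold, so that the deterministic arguments above apply pointwise.
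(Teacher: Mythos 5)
Your proposal is correct and follows essentially the same route as the paper: the complement of $E$ is exhibited as a countable union of closed nowhere-dense sets (the complements of the dense open tails $\bigcup_{n\ge k}B_n$, using Theorem~\ref{thm3}), Baire's theorem then gives second category, and the packing-dimension claim follows. The only difference is that where the paper compresses the last step into ``$\dim_{\rm{P}}E=\dim_{\rm{P}}X=s$'' (leaning on the citation of Cutler in Remark 1(iii)), you spell out the standard argument via the modified upper box dimension characterization and the fact that a second-category set cannot be covered by countably many nowhere-dense sets, which is a welcome amount of extra detail rather than a deviation.
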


\begin{rem}\label{rem1}
         ~
	\begin{enumerate}[(i)]
        \item J. Heinonen~\cite{Hein01} proved that, if $X$ is a metric space admitting a Borel measure $\mu$ which is $\textit{Ahlfors}$ $s$-$\textit{regular}$ $(0< s < \infty)$, then $X$ has Hausdorff dimension precisely $s$.
        \item From the proof of Corollary~\ref{cor1}, we see that the box dimension of the space we considered is $s$.
        \item By Cutler~\cite[Theorem 3.16]{Cutler95}, we derive that the packing dimension of the space we considered is $s$.
        \end{enumerate}
\end{rem}

\section{Proofs of the main results}

In this section we proved Theorems~\ref{thm1}~--~\ref{thm4} and Corollary~\ref{cor1}.

\begin{lem}\label{lem3}
	Suppose that $\{\xi_n\}_{n\ge1}$ is an exponentially mixing stationary process. Let $\{h_n\}_{n\ge1}$ be a decreasing sequence. For any point $y\in X$, if the series $\sum_{n=1}^{\infty}h_n^s$ diverges, we have $\mathbb{P}(\xi_n\in B(y,h_n)~i.o.)=1$.
	
\end{lem}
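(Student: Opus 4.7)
The plan is to apply the Kochen--Stone divergence Borel--Cantelli lemma to the events $A_n:=\{\xi_n\in B(y,h_n)\}$, using the exponential mixing hypothesis to decouple pair correlations sharply enough that the lemma yields full probability (not merely a positive lower bound). Although the statement does not explicitly restate Ahlfors $s$-regularity of $\mu$, I take it to be an implicit hypothesis inherited from the ambient theorems; it is needed to turn the divergence of $\sum h_n^s$ into divergence of $\sum\mathbb{P}(A_n)$.

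First I would observe, via stationarity and Ahlfors regularity, that $\mathbb{P}(A_n)=\mu(B(y,h_n))\ge C^{-1}h_n^s$, so the partial sums $T_N:=\sum_{n=1}^N\mathbb{P}(A_n)$ tend to infinity. Next, for $m<n$, stationarity rewrites
\[\mathbb{P}(A_m\cap A_n)=\mathbb{P}\bigl(\xi_1\in B(y,h_m),\ \xi_{n-m+1}\in B(y,h_n)\bigr),\]
and, setting $D:=\{\xi_{n-m+1}\in B(y,h_n)\}\in\mathcal{B}^{n-m+1}$, the exponential-mixing inequality applied with ball $A=B(y,h_m)$ and gap $n-m$ yields
\[\mathbb{P}(A_m\cap A_n)\le\bigl(\mu(B(y,h_m))+c\gamma^{n-m}\bigr)\mu(B(y,h_n)).\]
Summing over $m,n\le N$ and peeling off the diagonal, the product part contributes at most $T_N^{2}$, the diagonal at most $T_N$, and the geometric error at most $\frac{2c\gamma}{1-\gamma}T_N$ by a standard geometric-series estimate. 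Thus $\sum_{m,n\le N}\mathbb{P}(A_m\cap A_n)\le T_N^{2}+O(T_N)$, and the Kochen--Stone lemma gives
\[\mathbb{P}(A_n\text{ i.o.})\;\ge\;\limsup_{N\to\infty}\frac{T_N^{2}}{T_N^{2}+O(T_N)}\;=\;1.\]

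The main obstacle, and the point needing care, is the mixing step: one must line up the forward $\sigma$-algebra $\mathcal{B}^{n-m+1}$ (generated by $\{\xi_{n-m+i}\}_{i\ge1}$) with the single-coordinate event $\{\xi_{n-m+1}\in B(y,h_n)\}$ so that the correct exponent $\gamma^{n-m}$ appears, and one must verify that the error $c\gamma^{n-m}\mu(B(y,h_n))$ -- which, unlike the product term, is paired with only one small factor -- still contributes only $O(T_N)$ to the double sum. This is precisely what forces the Kochen--Stone ratio up to $1$ rather than merely to a positive constant, so that the conclusion $\mathbb{P}(A_n\text{ i.o.})=1$ follows without invoking any separate zero--one law.
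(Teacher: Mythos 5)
Your proposal is correct and follows essentially the same route as the paper: both are second-moment arguments with identical first-moment (Ahlfors regularity) and pair-correlation (stationarity plus exponential mixing, giving the error term $c\gamma^{n-m}\mu(B(y,h_n))$ summing to $O(T_N)$) estimates. The only difference is packaging---you invoke the Kochen--Stone lemma directly, while the paper runs the underlying Paley--Zygmund inequality by hand and then passes to the limit $N\to\infty$, $\lambda\to 0$; you also correctly identify that Ahlfors $s$-regularity of $\mu$ is an implicit hypothesis the paper uses in this lemma.
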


\begin{proof}
         Let $y\in X$ and denote $\widetilde{J}_n=\{\omega\in\Omega\colon\xi_n(\omega)\in B(y,h_n)\}$. Let $N\ge1$ and $S_N=\sum_{n=1}^{N}\chi_{\widetilde{J}_n}$, where $\chi$ is the indicator function. Then
	\[\mathbb{E}(S_N)=\sum_{n=1}^N\mathbb{P}(\widetilde{J}_n)=\sum_{n=1}^N\mu\bigl(B(y,h_n)\bigr).\]
	Since $\mu$ is Ahlfors $s$-regular, by (\ref{eqah}), we have
	\[\mathbb{E}(S_N)\ge C^{-1}\sum_{n=1}^N h_n^s\to\infty,\quad \text{as} ~N\to\infty.\]
	 Thus $\sum_{n=1}^{\infty}\mathbb{P}(\widetilde{J}_n)=\lim\limits_{N\to\infty}\mathbb{E}(S_N)=\infty$.

	 By the Paley-Zygmund inequality, for all $0<\lambda<1$, we have
	\begin{equation}\label{eq21}
	\begin{split}
	\mathbb{P}\left(S_N\ge\lambda\mathbb{E}(S_N)\right)&\ge(1-\lambda)^2\frac{{\mathbb{E}}^2(S_N)}{\mathbb{E}(S_N^2)}\\
	&=(1-\lambda)^2\frac{\left(\sum_{n=1}^N{\mathbb{P}(\widetilde{J}_n)}\right)^2}{\mathbb{E}(S_N^2)}.
	\end{split}
	\end{equation}
	Now we estimate $\mathbb{E}(S_N^2)$,
	\begin{equation}\label{eq22}
	\begin{split}
	\mathbb{E}(S_N^2)&=\mathbb{E} \left(\sum_{n=1}^N \chi_{\widetilde{J}_n}\right)^2=\mathbb{E}\bigl(\sum_{n=1}^N \chi_{\widetilde{J}_n}+\sum_{n=1}^N\sum_{\substack{m=1\\ m\ne n}}^N \chi_{\widetilde{J}_n\cap\widetilde{J}_m}\bigr)\\
	&=\sum_{n=1}^N \mathbb{P}(\widetilde{J}_n)+2\sum_{n=1}^N\sum_{m=1}^{n-1}\mathbb{P}(\widetilde{J}_n\cap \widetilde{J}_m).
	\end{split}
	\end{equation}
	Using the stationarity and the exponentially mixing property of the process $\{\xi_n\}$, we get 
	\[\begin{split}
	\mathbb{P}(\widetilde{J}_n\cap\widetilde{J}_m)&=\mathbb{P}\{\xi_n\in B(y,h_n),\xi_m\in B(y,h_m)\}=\mathbb{P}\{\xi_1\in B(y,h_m),\xi_{n-m+1}\in B(y,h_n)\}\\
	&\le\mathbb{P}\bigl(\xi_1\in B(y,h_m)\bigr)\mathbb{P}\bigl(\xi_{n-m+1}\in B(y,h_n)\bigr)+c{\gamma}^{n-m}\mathbb{P}\bigl(\xi_{n-m+1}\in B(y,h_n)\bigr)\\
	&=\mathbb{P}(\widetilde{J}_n)\mathbb{P}(\widetilde{J}_m)+c{\gamma}^{n-m}\mathbb{P}(\widetilde{J}_n).
	\end{split}\]
	Hence the equality \eqref{eq22} reads as follows
	\begin{equation}\label{eq23}
	\begin{split}
	\mathbb{E}(S_N^2)&\le\sum_{n=1}^N \mathbb{P}(\widetilde{J}_n)+2\sum_{n=1}^N\sum_{m=1}^{n-1}\left(\mathbb{P}(\widetilde{J}_n)\mathbb{P}(\widetilde{J}_m)+c{\gamma}^{n-m}\mathbb{P}(\widetilde{J}_n)\right)\\
	&\le\sum_{n=1}^N \mathbb{P}(\widetilde{J}_n)+\left(\sum_{n=1}^N\mathbb{P}(\widetilde{J}_n)\right)^2+\frac{2c\gamma}{1-\gamma}\sum_{n=1}^N \mathbb{P}(\widetilde{J}_n)\\
	&\le c'\sum_{n=1}^N\mathbb{P}(\widetilde{J}_n)+\left(\sum_{n=1}^N\mathbb{P}(\widetilde{J}_n)\right)^2,
	\end{split}
	\end{equation}
	where $c'$ is a constant. Combining~~\eqref{eq21} and~~\eqref{eq23}, we derive that
	\begin{equation}\label{eq24}
	         \mathbb{P}\left(S_N\ge\lambda\mathbb{E}(S_N)\right)\ge(1-\lambda)^2\frac{\left(\sum_{n=1}^N\mathbb{P}(\widetilde{J}_n)\right)^2}{c'\sum_{n=1}^N\mathbb{P}(\widetilde{J}_n)+\left(\sum_{n=1}^N\mathbb{P}(\widetilde{J}_n)\right)^2}\to1,
	 \end{equation}
	as $N\to\infty$ and $\lambda\to0$ due to the divergence of the series $\sum_{n=1}^{\infty}\mathbb{P}(\widetilde{J}_n)$.
	We notice that 
	\[\{\omega\in\widetilde{J}_n~i.o.\}=\{\lim\limits_{N\to\infty}S_N=\infty\}\supset\{S_N\ge\lambda\mathbb{E}(S_N)\}.\]
	By \eqref{eq24} we have 
	\[\mathbb{P}(\widetilde{J}~i.o.)\ge\lim\limits_{N\to\infty}\mathbb{P}\bigl(S_N\ge\lambda\mathbb{E}(S_N)\bigr)=1,\]
	which derives $\mathbb{P}(\widetilde{J}~i.o.)=1$. 
\end{proof}

\begin{proof}[Proof of Theorem~\ref{thm1}]
	First we show that $\mu\bigl(E\bigr)=0$ for any $\omega\in\Omega$ if $\sum_{n=1}^{\infty}r_n^s<\infty$.
	
	Since $\mu$ is Ahlfors $s$-regular, by~\eqref{eqah}, we have $\sum_{n=1}^{\infty}\mu(B_n)\le C\sum_{n=1}^{\infty}r_n^s<{\infty}$. By the Borel-Cantelli lemma, $\mu(\limsup\limits_{n\to\infty} B_n)=0$, that is $\mu\bigl(E\bigr)=0$.
	
	Now we consider the divergence case. Let $y\in X$ and  
	\[F(y)=\{\omega\in \Omega\colon \xi_n(\omega)\in B(y,r_n)~i.o.\}.\]
	By lemma~\ref{lem3}, we have $\mathbb{P}\bigl(F(y)\bigr)=1$. Since $y\in E\Leftrightarrow \omega\in F(y)$, applying Fubini's theorem gives
	\[
	\begin{split}
	\mathbb{P}\left(\mu(E)\right)&=\int\int1_{E}(y)d\mu(y)d\mathbb{P}(\omega)=\int\int1_{F(y)}(\omega)d\mathbb{P}(\omega)d\mu(y)\\
	&=\int\mathbb{P}\bigl(F(y)\bigr)d\mu(y)=1.
	\end{split}
	\]
	Hence, for $\mathbb{P}$-almost all $\omega$, $\mu\bigl(E\bigr)=1$.
\end{proof}
		
	We recall a general case of the mass transference principle~\cite[Theorem 3]{BerVel06}, suitable for the proof of our Theorem~\ref{thm2}. 
	
	Let $(Y,\rho)$ be a locally compact metric space. Let $g$ be a doubling dimension function and suppose
	there exist  constants $0<c_1<1<c_2<\infty$ and $r_0>0$ such that 
         \[c_1g\bigl(r(B)\bigr)\le\Hm^g(B)\le c_2g\bigl(r(B)\bigr),\]
         for any ball $B=B(x,r)$ with $x\in Y$ and $r\le r_0$. Next, given a dimension function $f$ and a ball $B=B(x,r)$ we define $B^f:=B\bigl(x,g^{-1}f(r)\bigr)$. 

         \begin{thm}[Beresnevich--Velani]\label{thm21}
	Let $(Y,\rho)$ and $g$ be as above and let $\{B_i\}_{i\in\mathbb{N}}$ be a sequence of balls in $Y$ with $r(B_i)\to0$ as $i\to\infty$. Let $f$ be a dimension function such that $f(r)/g(r)$ is monotonic and suppose that for any
	ball $B$ in $Y$
         \[\Hm^g(B\cap \limsup\limits_{i\to \infty}B_i^f)=\Hm^g(B).\]
         Then, for any ball $B$ in $Y$
         \[\Hm^f(B\cap \limsup\limits_{i\to \infty}B_i^g)=\Hm^f(B).\]
         \end{thm}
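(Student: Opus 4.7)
The plan is to prove the statement by combining the mass distribution principle with a Cantor-like construction inside $B\cap\limsup_i B_i^g$. Since $B_i^g=B(x_i,g^{-1}g(r_i))=B_i$, the target is to show that the original limsup set has full $f$-Hausdorff measure inside every ball $B$, given that the ``shrunken'' family $\{B_i^f\}$ has full $g$-measure there. The natural first reduction is to show that it suffices to produce, for every ball $B$, a uniform positive lower bound $\Hm^f(B\cap\limsup_i B_i)\ge\kappa\,\Hm^f(B)$; the upgrade from positive to full measure is then standard and follows by applying the lower bound to arbitrarily small sub-balls of $B$, together with the doubling properties of $g$ and $f$ and the density theorem for Hausdorff measures on doubling metric spaces.

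Next I would construct, by induction on levels $k\ge 1$, a nested family of compact sets $K_k\subset B$ built from disjoint closed balls. At each stage I choose a scale $\rho_k\searrow 0$ and, on each surviving ball $C$ of level $k-1$, invoke the hypothesis that $\Hm^g(C\cap\limsup_i B_i^f)=\Hm^g(C)$: truncating the limsup to indices with $g^{-1}f(r_i)$ comparable to some scale $\sigma_k\ll \rho_k$, I extract a finite Vitali-type subfamily $\{B_{i_j}^f\}$ whose union carries a definite fraction of $\Hm^g(C)$. The corresponding balls $B_{i_j}$ (whose radii are $r_{i_j}\approx g^{-1}g(r_{i_j})$) become the level-$k$ pieces of the Cantor set. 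Choosing the scales carefully so that $B_{i_j}^f\subset B_{i_j}\subset C$ with controlled overlap, and so that $\sigma_k$ shrinks fast enough relative to $\rho_{k-1}$, one obtains $K_\infty=\bigcap_k K_k\subset B\cap\limsup_i B_i$.

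The crux of the argument is the definition and local mass estimate of a probability measure $\nu$ on $K_\infty$. I would assign weights at each level proportionally to the $g$-content captured by the chosen $B_{i_j}^f$, so that after normalisation $\nu$ becomes a Kolmogorov-consistent probability measure on $K_\infty$. The key estimate to prove is that for any ball $C$ of small radius $\rho$, $\nu(C)\le \mathrm{const}\cdot f(\rho)$. This is where the monotonicity of $f/g$, the doubling of both dimension functions, and the Ahlfors-type bound $\Hm^g(B(x,r))\asymp g(r)$ combine: one compares the $g$-mass of balls with $g^{-1}f(\rho)$-balls via the identity $g(g^{-1}f(\rho))=f(\rho)$, and the telescoping of weights across levels yields exactly an $f(\rho)$-upper bound. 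With this estimate the mass distribution principle gives $\Hm^f(K_\infty)\ge c>0$, completing the lower bound.

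The main obstacle I expect is the bookkeeping in the Cantor construction: choosing the scales $\rho_k$ and $\sigma_k$ so that the extracted balls at each stage are genuinely disjoint (to keep $\nu$ well-defined), sufficiently numerous (to propagate positive mass), and geometrically compatible with the doubling constants of $f$ and $g$ so that the local estimate $\nu(C)\lesssim f(\rho)$ holds for \emph{all} small radii, not only the scales $\sigma_k$. Handling radii intermediate between consecutive levels is the most technical point, and is where the monotonicity of $f/g$ and the doubling property are used in an essential way; once that estimate is in place, the mass distribution principle and the density upgrade to full measure are routine.
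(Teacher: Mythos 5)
This statement is not proved in the paper at all: it is quoted verbatim as Theorem~3 of Beresnevich--Velani \cite{BerVel06} (the mass transference principle) and used as a black box in the proof of Theorem~\ref{thm2}. So there is no internal proof to compare against; what you have written is an outline of the strategy of the \emph{original} Beresnevich--Velani argument, and as such the overall shape is right --- a nested Cantor-type subset of $B\cap\limsup_i B_i$ obtained by repeatedly extracting Vitali-type disjoint subfamilies of the shrunken balls $B_{i}^f$ using the full-$\Hm^g$-measure hypothesis, a level-by-level weight assignment, a local estimate $\nu(C)\lesssim f(r(C))$, and the mass distribution principle.

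However, as a proof your proposal has a genuine gap: everything that makes the theorem true is described rather than established. The disjointness and counting at each level, the propagation of mass, and above all the local estimate $\nu(C)\le \mathrm{const}\cdot f(r(C))$ for balls $C$ of \emph{arbitrary} small radius (not just the construction scales) are exactly the content of the theorem, and you explicitly defer them as ``the main obstacle.'' Two further points would need repair even at the level of the plan. First, your reduction to ``a uniform positive lower bound $\Hm^f(B\cap\limsup_i B_i)\ge\kappa\,\Hm^f(B)$, then upgrade by density theorems'' does not match the structure of the problem: under the hypotheses the only non-trivial case is $f(r)/g(r)\to\infty$, where $\Hm^f(B)=\infty$, so one must show the intersection has \emph{infinite} $\Hm^f$-measure; this is done in the original argument by building, for each $\eta>0$, a measure whose mass distribution bound yields $\Hm^f(B\cap\limsup_i B_i)\ge\eta$, not by a fixed-proportion-plus-density argument. (When $f/g$ is bounded the statement reduces to comparability of $\Hm^f$ and $\Hm^g$ and needs no construction.) Second, the hypothesis only controls $\Hm^g$ of the limsup of the $B_i^f$, so the Vitali extraction must be quantified carefully to ensure the selected $B_{i_j}^f$ capture a definite proportion of $\Hm^g(C)$ while the enlarged balls $B_{i_j}$ remain inside $C$ and essentially disjoint; this is where the monotonicity of $f/g$ enters and it cannot be waved through. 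In short: correct roadmap to the known external proof, but not a proof.
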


\begin{proof}[Proof of Theorem~\ref{thm2}]
	For any $\delta>0$, there is an integer $n_0\ge1$ such that $0<2r_n<\delta$ for any $n\ge n_0$. Since $f(r)/r^s$ is increasing as $r\to0$ and $E\subset\bigcup_{n=n_0}^{\infty}B_n$, we have
\[\Hm_{\delta}^f(E)\le \sum_{n=n_0}^{\infty}f(2r_n)\le2^s\sum_{n=n_0}^{\infty}f(r_n).\]
If the series $\sum_{n=1}^{\infty}f(r_n)$ converges, then $\sum_{n=n_0}^{\infty}f(r_n)\to0$ as $\delta\to0$ since it deduces that $n_0\to\infty$. Therefore $\Hm^f(E)=\lim\limits_{\delta\to0}\Hm_{\delta}^f(E)=0$ for any $\omega\in\Omega$.

For the divergence part, denote $B_n^f=B\bigl(\xi_n,\left(f(r_n)\right)^{1/s}\bigr)$. By Theorem \ref{thm1}, we obtain
\[\mu\bigl(\limsup_ {n\to\infty}B_n^f\bigr)=1 ~\text{a.s.},\]
since 
\[\sum_{n=1}^{\infty}\left(\left(f(r_n)\right)^{1/s}\right)^s=\sum_{n=1}^{\infty}f(r_n)=\infty.\]
By Theorem \ref{thm21}, we have 
\[\Hm^f(E\cap B)= \Hm^f(B)~\text{a.s.}.\]
\end{proof}

For proving Theorem~\ref{thm3}, \ref{thm4} and Corollary~\ref{cor1} we will use the following elementary lemma, whose proof can be found in \cite{JRM00}.

\begin{lem}\label{lem1}
If the metric space $(X,d)$ is compact, then $X$ is bounded, separated and complete. 
\end{lem}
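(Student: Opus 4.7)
The plan is to establish the three conclusions (bounded, separable, complete) one at a time, each as a short consequence of standard compactness arguments in a metric space. I read ``separated'' in the statement as \emph{separable}, i.e.\ admitting a countable dense subset, which is the natural reading in the context of the paper (where the lemma is used to invoke countable covers and dense countable sets of balls).

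First I would show boundedness. Fix any $x_0 \in X$. The family $\{B(x_0,n)\}_{n\ge 1}$ is an open cover of $X$, so by compactness there is a finite subcover, which reduces to a single ball $B(x_0,N)$ for the largest index $N$. Hence $\diam X \le 2N$. Next I would prove separability. For each $k \ge 1$ the open cover $\{B(x,1/k)\}_{x \in X}$ has a finite subcover; pick a finite set $F_k \subset X$ of centers. Then $D = \bigcup_{k\ge 1} F_k$ is a countable subset of $X$, and for every $x \in X$ and every $k$ there exists a point of $F_k$ within distance $1/k$ of $x$, which shows that $D$ is dense.

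Finally I would establish completeness. Let $(x_n)_{n\ge 1}$ be a Cauchy sequence in $X$. Since compactness is equivalent to sequential compactness in metric spaces, there is a subsequence $x_{n_j} \to x^\ast \in X$. A standard $\varepsilon/2$ argument — choose $N$ so that $d(x_n,x_m) < \varepsilon/2$ for $n,m\ge N$, and choose $j$ with $n_j \ge N$ and $d(x_{n_j},x^\ast) < \varepsilon/2$ — then yields $d(x_n,x^\ast) < \varepsilon$ for all $n\ge N$, so $x_n \to x^\ast$.

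The main (mild) obstacle is just the preliminary equivalence between compactness and sequential compactness in metric spaces, which is needed to extract the convergent subsequence in the completeness step; I would simply quote this standard fact. Beyond that, each of the three conclusions is textbook and requires no deeper input.
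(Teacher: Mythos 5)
Your proof is correct. The paper itself gives no argument for this lemma --- it simply refers the reader to Munkres' \emph{Topology} --- and your three steps (finite subcover of concentric balls for boundedness, finite $1/k$-nets for separability, and the Cauchy-subsequence $\varepsilon/2$ argument via sequential compactness for completeness) are exactly the standard textbook proof being invoked; your reading of ``separated'' as \emph{separable} also matches how the lemma is used in the proof of Theorem 1.3.
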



\begin{proof}[Proof of Theorem~\ref{thm3}]
	The compact metric space $(X,d)$ is separated, hence there exists a countable dense subset denoted by $A$. Letting $\mathcal{A}=\{B(a,1/k)|a\in A,k\ge1\}$, now we prove that $\mathcal{A}$ is a countable base of $(X,d)$. 

    For any open subset $U$ in $X$ and $x\in U$, we can find a $\epsilon_x>0$ such that $B(x,\epsilon_x)\subset U$. There exists $k_x\ge1$ with $2/{k_x}<\epsilon_x$. Since $A$ is dense in $X$, we can choose $a_x\in A$ satisfying $d(x,a_x)<1/{k_x}$. Hence $x\in B(a_x,1/{k_x})\subset B(x, \epsilon_x)\subset U $. It yields that $U=\bigcup\limits_{x\in U}\{x\}\subset\bigcup\limits_{x\in U}B(a_x,1/{k_x})\subset U$, that is $U=\bigcup\limits_{x\in U}B(a_x,1/{k_x})$, where $B(a_x,1/{k_x})\in \mathcal{A}$. Therefore $\mathcal{A}$ is a countable base of $(X,d)$, since $\{B(a_x,1/{k_x})|x\in U\}$ is at most countable. 

	For any $B=B(a,r)\in\mathcal{A}$, we will show $\mathbb{P}(E\cap B\ne\varnothing)=1$.
	
	Denote $\widetilde{A}_n=\{\omega\in \Omega\colon B_n(\omega)\cap B\ne\varnothing\}$. We notice that $\widetilde{A}_n=\{\xi_n\in B(a,r+r_n)\}$.~Then we have 
	\[\sum_{n=1}^N (r+r_n)^s\ge \sum_{n=1}^N r^s\to\infty, \quad\text{as}~ N\to\infty.\]
	 Thus the series $\sum_{n=1}^{\infty}(r+r_n)^s$ diverges. Hence $\mathbb{P}(E\cap B\ne\varnothing)=\mathbb{P}(\widetilde{A}_n~i.o.)=1$ due to lemma~\ref{lem3}. 
	
	For convenience, we denote $\mathcal{A}=\{B^i\}_{i\ge1}$. For any $B^i\in\mathcal{A}$, let $A_i=\{\omega| B^i \cap E =\varnothing\}$, then $\mathbb{P}(A_i)=0$. Thus
	\[\mathbb{P}\{\omega|E\cap B^i=\varnothing \text{~for~some}~i\ge1\}=\mathbb{P}\bigl(\bigcup_{i\ge1}A_i\bigr)\le\sum_{i\ge1}\mathbb{P}(A_i)=0.\]
	So there exists a null probability event outside which for all balls $B^i$ in $\mathcal{A}$, we have $E\cap B^i\ne \varnothing$. For any $x\in X$ and open set $U\ni x$, there exist $\{B^{i_n}\}_{n\ge1}\subset \mathcal{A}$ with $U=\bigcup_{n\ge1}B^{i_n}$. Hence \[
	\mathbb{P}\{E\cap U\ne\varnothing\}\ge\mathbb{P}\{\omega|E\cap B^i\ne\varnothing \text{~for~any}~i\ge1\}=1.\]
	 Therefore $E$ is dense in $X$ a.s..	  
\end{proof}

\begin{proof}[Proof of Corollary~\ref{cor1}]
	Since $X$ is bounded, let $\{y_k\}_{k=1}^n$ be a maximal $r$-separated set of $X$ $(r>0)$. Then the balls $B(k)=B(y_k,r/3)$ are disjoint, that is $B(k)\cap B(j)=\varnothing$ for any $ k\ne j$. Thus, by~\eqref{eqah}
	\[C^{-1}3^{-s}nr^s\le \mu\bigl(B(1)\bigr)+\dots+\mu\bigl(B(n)\bigr)\le1.\]
	On the other hand, the balls $\{3B(k)\}^n_{k=1}$ cover $X$ where $3B(k)=B(y_k,r)$, so
	\[1\le\mu\bigl(3B(1)\bigr)+\dots+\mu\bigl(3B(n)\bigr)\le Cnr^s.\]
	Combining these estimates, it follows that $\dim_{\rm{B}} X=s$. From Theorem~\ref{thm2}, we have $\dim_{\rm{B}} E=\dim_{\rm{B}}X=s~a.s.$. 
\end{proof}

\begin{proof}[Proof of Theorem~\ref{thm4}]
	First we show that $E$ is a residual set a.s..
	
	Now we check if $E^{\mathrm{C}}=\bigcup_{N=1}^{\infty}\bigcap_{n=N}^{\infty} B_n^{\mathrm{C}}$ is a first category set almost surely. Denote $E_N^{\mathrm{C}}=\bigcap_{n=N}^{\infty} B_n^{\mathrm{C}}$. We suppose that there exists $N_1\ge1$ such that $E_{N_1}^{\mathrm{C}}$ is not sparse. That means there is a ball $B(y,r)$ with $r>0$ satisfying 
	\[\overline{B}(y,r)\subset \overline{E_{N_1}^{\mathrm{C}}}=E_{N_1}^{\mathrm{C}}=X\backslash{\bigcup_{n=N_1}^{\infty}B_n}.\]
	 Hence $B(y,r)\cap{\bigcup_{n=N_1}^{\infty}B_n}=\varnothing$. However $ \bigcup_{n=N_1}^{\infty}B_n$ is dense in $X$ a.s., since $E\subset\bigcup_{n=N_1}^{\infty}B_n$ is dense in $X$ a.s., which implies $B(y,r)\cap \bigcup_{n=N_1}^{\infty}B_n\ne\varnothing$. Contradiction. Hence $E^{\mathrm{C}}$ is a set of first category almost surely. It derives that $E$ is a residual set a.s..
	
	From Lemma \ref{lem1}, we know the space $X$ is complete. Then by Baire's category theorem, $X$ is a set of second category. We have shown that $E^{\mathrm{C}}$ is a first category set a.s.. Thus $E$ is second category a.s.. It follows that $\dim_{\rm{P}} E=\dim_{\rm{P}} X=s$ a.s..	
\end{proof}

\section{dynamical covering problem}

Our results can be applied to the dynamical covering problem as follows.

We say that a metric measure preserving system (m.m.p.s. for short) $(X,\mathcal{B},\mu,T,d)$ is $\textit{exponentially mixing}$ if there exist two constants $c>0$ and $0<\gamma<1$ such that 
\[|\mu(E|T^{-n}F)-\mu(E)|\le c\gamma^n~~~~~(n\ge1)\]
holds for any ball $E$ and any measurable set $F\in\mathcal{B}$ with $\mu(F)>0$. Here $\mu(A|B)$ denotes $\frac{\mu(A\cap B)}{\mu(B)}$. Sometimes we say $\mu$ is exponentially mixing.

We assume that $(X,d)$ is compact, endowed with a Borel probability measure $\mu$ which is Ahlfors $s$-regular $(0<s<\infty)$. Define the $\textit{dynamical covering set}$ as 
\[E(x,l_n)=\bigl\{y\in X\colon d(T^n x,y)<l_n~\text{for~infinitely ~many}~n\ge0\bigr\},\]
where $x\in X$ and $\{l_n\}_{n\ge0}$ is a sequence of positive real numbers which is decreasing to zero. The dynamical covering set $E(x,l_n)=\bigcap_{k=0}^{\infty}\bigcup_{n=k}^{\infty}B(T^nx,l_n)$. We study the size of $E(x,l_n)$ from the viewpoint of measure, dimension and topology.

\begin{thm}\label{thmm}
Let $(X,\mathcal{B},\mu,T,d)$ be an exponentially mixing m.m.p.s. and the measure $\mu$ be Ahlfors s-regular.
	For $\mu$-almost all $x\in X$, we have
	\[\mu\bigl(E(x,l_n)\bigr)=
	\begin{cases}
	0&  \text{if $\sum_{n=0}^{\infty}l_n^s<\infty$}\\
	1&  \text{if $\sum_{n=0}^{\infty}l_n^s=\infty$}
	\end{cases}.\]
\end{thm}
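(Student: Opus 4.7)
The plan is to reduce Theorem~\ref{thmm} directly to Theorem~\ref{thm1} by recasting the deterministic orbit of $T$, started from a $\mu$-random point, as a stationary process. I would take the probability space to be $(\Omega,\mathcal{B},\mathbb{P}) := (X,\mathcal{B},\mu)$, set $\xi_n(x) := T^{n-1}x$ for $n \ge 1$, and reindex $r_n := l_{n-1}$. Then $\limsup_{n\to\infty} B(\xi_n(x), r_n) = E(x, l_n)$ and $\sum_{n \ge 1} r_n^s = \sum_{n \ge 0} l_n^s$, so Theorem~\ref{thmm} is literally what Theorem~\ref{thm1} asserts for this process, with \emph{almost surely} read as \emph{$\mu$-a.e.\ $x$}.

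To invoke Theorem~\ref{thm1} I need to check its three hypotheses on $\{\xi_n\}_{n \ge 1}$: stationarity, the matching law $\mu(A) = \mathbb{P}(\xi_1 \in A)$, and exponential mixing. The first two are immediate: the $T$-invariance of $\mu$ implies that the law of $(x, Tx, T^2 x, \ldots)$ under $\mu$ equals the law of $(Tx, T^2x, T^3 x, \ldots)$, giving stationarity, while $\xi_1 = \mathrm{id}$ forces $\mathbb{P}(\xi_1 \in A) = \mu(A)$; Ahlfors $s$-regularity of $\mu$ is part of the hypotheses.

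The step that requires actual work is the exponential mixing of the process. Here I would use that
\[
\mathcal{B}^{n+1} = \sigma(\xi_{n+1}, \xi_{n+2}, \ldots) = \sigma(T^n, T^{n+1}, \ldots) \subset T^{-n}\mathcal{B},
\]
since each $T^{n+i}$ factors as $T^i \circ T^n$. Consequently every $D \in \mathcal{B}^{n+1}$ has the form $D = T^{-n}F$ for some $F \in \mathcal{B}$, and $\mu(D) = \mu(F)$ by $T$-invariance, so the positivity requirement in the m.m.p.s.\ mixing definition is preserved. Applying that definition to the ball $A$ and to $F$ yields
\[
\bigl|\mathbb{P}(\xi_1 \in A \mid D) - \mathbb{P}(\xi_1 \in A)\bigr| = \bigl|\mu(A \mid T^{-n}F) - \mu(A)\bigr| \le c\gamma^n,
\]
with the same $c$ and $\gamma$. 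This is exactly the process-level exponentially mixing condition, so Theorem~\ref{thm1} applies verbatim and delivers both halves of the dichotomy. The main subtlety is really just the $\sigma$-field inclusion $\mathcal{B}^{n+1} \subset T^{-n}\mathcal{B}$: everything else is formal bookkeeping (reindexing of the sums and the $\limsup$ set, and the invariance identity $\mu(T^{-n}F) = \mu(F)$), so no new measure-theoretic input beyond Theorem~\ref{thm1} is needed.
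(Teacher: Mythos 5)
Your reduction is exactly the route the paper intends: Section 3 presents this theorem as a direct application of Theorem~\ref{thm1} to the stationary process $\xi_n = T^{n-1}$ on $(X,\mathcal{B},\mu)$, and you have correctly verified the only nontrivial hypothesis, namely that $\mathcal{B}^{n+1}\subset T^{-n}\mathcal{B}$ so the m.m.p.s.\ mixing condition yields the process-level one with the same constants. The proposal is correct and matches the paper's (implicit) argument, with the bookkeeping spelled out in more detail than the paper provides.
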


\begin{thm}\label{thmm}
Let $(X,\mathcal{B},\mu,T,d)$ be an exponentially mixing m.m.p.s. and the measure $\mu$ be Ahlfors s-regular. Assume that $f$ is a doubling dimension function with $f(r)/r^s$ being nondecreasing as $r\to0$. Then for any ball $B$ of $X$,
\[\Hm^f(E(x,l_n)\cap B)=
\begin{cases}
	0&  \text{if $\sum_{n=0}^{\infty}f(l_n)<\infty$}\\
	\Hm^f(B)&  \text{if $\sum_{n=0}^{\infty}f(l_n)=\infty$}
	\end{cases} \quad \]
holds for $\mu$-almost all $x\in X$. Furthermore
	\[\dim_{\rm{H}} E(x,l_n)=\alpha ~a.e.,\]
	where $\alpha=\inf\{t\le s\colon \sum_{n=0}^{\infty}l_n^t<\infty\}$.
\end{thm}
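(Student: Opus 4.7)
The plan is to reduce the dynamical statement to the probabilistic Theorem~\ref{thm2} by interpreting the orbit as a stationary exponentially mixing process. Specifically, take the probability space to be $(X,\mathcal{B},\mu)$ and define $\xi_n(x) = T^{n-1}x$ for $n\ge 1$. Since $T$ preserves $\mu$, the process $\{\xi_n\}_{n\ge 1}$ is stationary, and the law of $\xi_1$ is exactly $\mu$.

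The first step is to check that $\{\xi_n\}$ is exponentially mixing in the sense defined in Section~1. Any event $D$ in the $\sigma$-field $\mathcal{B}^{n+1}$ generated by $\{\xi_{n+i}\}_{i\ge 1}$ has the form $D = \{T^n x \in F\}$ for some $F\in\mathcal{B}$, because $\xi_{n+i} = T^{n+i-1}x = T^{i-1}(T^n x)$ is measurable with respect to $T^n x$. Consequently
\[
\mathbb{P}(\xi_1 \in A \mid D) - \mathbb{P}(\xi_1 \in A) = \mu(A \mid T^{-n}F) - \mu(A),
\]
which is bounded by $c\gamma^n$ by the exponentially mixing hypothesis on the m.m.p.s.. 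Thus the hypotheses of Theorem~\ref{thm2} are met on the probability space $(X,\mathcal{B},\mu)$, with randomness $\omega$ corresponding to the base point $x$.

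Next, observe that the dynamical covering set $E(x,l_n)$ coincides with the random covering set $\limsup_n B(\xi_{n+1},l_n) = \limsup_n B(T^n x, l_n)$; the finite index shift from starting at $n=0$ rather than $n=1$ is irrelevant for the $\limsup$. Applying Theorem~\ref{thm2} then gives both dichotomies: for the convergence case $\sum f(l_n)<\infty$, the covering bound $\Hm^f_\delta(E(x,l_n)) \le 2^s \sum_{n\ge n_0} f(l_n)$ yields $\Hm^f(E(x,l_n)\cap B)=0$ for every $x$; for the divergence case, Theorem~\ref{thm2} furnishes $\Hm^f(E(x,l_n)\cap B) = \Hm^f(B)$ for $\mu$-almost every $x$, which, as in the original proof, is obtained by using the preceding measure theorem (the dynamical analogue of Theorem~\ref{thm1}) to show $\mu(\limsup_n B(T^n x,(f(l_n))^{1/s})) = 1$ a.e.\ and then invoking the Beresnevich--Velani mass transference principle (Theorem~\ref{thm21}) with $g(r)=r^s$. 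The Hausdorff dimension formula $\dim_{\rm H} E(x,l_n) = \alpha$ for $\mu$-a.e.\ $x$ follows by the standard sandwich argument: for $t>\alpha$ the series $\sum l_n^t$ converges, giving $\Hm^t(E(x,l_n))=0$, while for $t<\alpha$ the divergence part with $f(r)=r^t$ yields $\Hm^t(E(x,l_n)\cap B)=\Hm^t(B)>0$.

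The only genuine subtlety in this plan is the $\sigma$-field reduction in the mixing verification: one must confirm that conditioning on an arbitrary event in $\sigma(T^n x, T^{n+1}x,\ldots)$ reduces to conditioning on a preimage $T^{-n}F$, so that the m.m.p.s.\ mixing estimate applies uniformly over all such $D$. This is where the argument might appear to require extra care, but it follows immediately from the deterministic identity $\xi_{n+i}=T^{i-1}(T^n x)$, which forces $\mathcal{B}^{n+1}$ to be a sub-$\sigma$-field of $\sigma(T^n x)$. Once this identification is clean, the rest of the proof is essentially a translation of the proof of Theorem~\ref{thm2}, with $\omega\in\Omega$ replaced by $x\in X$ and $\mathbb{P}$ replaced by $\mu$.
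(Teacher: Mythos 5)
Your proposal is correct and is exactly the reduction the paper intends: Section~3 states this theorem as a direct application of Theorem~\ref{thm2} via the identification $\xi_n(x)=T^{n-1}x$ on the probability space $(X,\mathcal{B},\mu)$, and your verification that $\mathcal{B}^{n+1}\subset\sigma(T^nx)$ (so that every conditioning event is of the form $T^{-n}F$ and the m.m.p.s.\ mixing bound applies) supplies the one detail the paper leaves implicit. No issues.
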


\begin{thm}\label{thmd}
	Let $(X,\mathcal{B},\mu,T,d)$ be an exponentially mixing m.m.p.s. and $\mu$ be Ahlfors s-regular. Then the dynamical covering set $E(x,l_n)$ is dense in X a.e.. 
\end{thm}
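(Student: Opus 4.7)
The plan is to deduce Theorem~\ref{thmd} directly from Theorem~\ref{thm3} by realizing the orbit $\{T^{n-1}x\}_{n\ge1}$ as a stationary process on the probability space $(X,\mathcal{B},\mu)$, with $x$ itself playing the role of the random input. The m.m.p.s.\ hypothesis supplies exactly what is needed: $T$-invariance of $\mu$ gives stationarity, and the dynamical mixing condition translates essentially verbatim into the stationary-process mixing condition used in Section~2.

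First I would set $\xi_n(x):=T^{n-1}x$ for $n\ge1$. Because $\mu$ is $T$-invariant, $\{\xi_n\}$ is stationary and $\xi_1$ is distributed according to $\mu$, so the measure defined by~\eqref{eqmu} coincides with $\mu$ and inherits Ahlfors $s$-regularity from the hypothesis. To check exponential mixing, observe that $\mathcal{B}^{n+1}$, the $\sigma$-field generated by $\{\xi_{n+i}\}_{i\ge1}=\{T^{n+i-1}x\}_{i\ge1}$, is contained in $T^{-n}\mathcal{B}$. Any $D\in\mathcal{B}^{n+1}$ therefore has the form $D=T^{-n}F$ with $F\in\mathcal{B}$, and the m.m.p.s.\ bound $|\mu(A\mid T^{-n}F)-\mu(A)|\le c\gamma^n$ for balls $A$ yields precisely the required $|\mathbb{P}(\xi_1\in A\mid D)-\mathbb{P}(\xi_1\in A)|\le c\gamma^n$.

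With the hypotheses of Theorem~\ref{thm3} verified, I would apply it to conclude that $\limsup_{n\to\infty} B(\xi_n,l_n)=\limsup_{n\ge1} B(T^{n-1}x,l_n)$ is dense in $X$ for $\mu$-almost every $x$. To pass from this set to $E(x,l_n)=\limsup_{n\ge0} B(T^n x,l_n)$, I would use the monotonicity of $\{l_n\}$: writing $m=n-1$, the inclusion $B(T^{n-1}x,l_n)=B(T^m x,l_{m+1})\subseteq B(T^m x,l_m)$ gives $\limsup_{n\ge1}B(T^{n-1}x,l_n)\subseteq E(x,l_n)$, so density of the former forces density of the latter. The only real obstacle is the bookkeeping to match the two mixing conditions; once the identification $\xi_n=T^{n-1}x$ is chosen and the mild index shift between the process and $E(x,l_n)$ is absorbed by monotonicity of $\{l_n\}$, no new probabilistic content beyond Theorem~\ref{thm3} is needed.
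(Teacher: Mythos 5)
Your reduction is exactly the route the paper intends: Section~3 presents Theorem~\ref{thmd} as a direct application of Theorem~\ref{thm3} via the identification $\xi_n=T^{n-1}x$, with $T$-invariance giving stationarity and the m.m.p.s.\ mixing bound on sets $T^{-n}F\supseteq$ (the tail $\sigma$-field $\mathcal{B}^{n+1}$) giving the process mixing condition. Your handling of the index shift by monotonicity of $\{l_n\}$ is fine (one could equally take $r_n=l_{n-1}$ to get an exact identification), so the proposal is correct and essentially the same as the paper's.
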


\begin{cor}\label{cor5}
	Let $(X,\mathcal{B},\mu,T,d)$ be an exponentially mixing m.m.p.s. and $\mu$ be Ahlfors s-regular. For $\mu$-almost all $x\in X$, we have $\dim_B E(x,l_n)=s$.
\end{cor}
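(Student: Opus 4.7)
The plan is to mirror the proof of Corollary~\ref{cor1} in the dynamical setting. The exponential mixing hypothesis on the m.m.p.s.\ $(X,\mathcal{B},\mu,T,d)$ makes the orbit process $\xi_n(x):=T^{n-1}x$ both stationary (by $T$-invariance of $\mu$) and exponentially mixing in the sense of Section~1, so that the theorems of Section~2 translate directly to this setting, with ``$\mathbb{P}$-almost surely'' in $\omega$ becoming ``$\mu$-almost everywhere'' in $x$.

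First I would re-run the Ahlfors regularity argument appearing in the proof of Corollary~\ref{cor1} to establish $\dim_B X=s$. The computation is independent of the dynamics: pick a maximal $r$-separated family $\{y_k\}_{k=1}^{n}\subset X$. The disjoint balls $B(y_k,r/3)$ yield, via~\eqref{eqah}, the estimate $n\le C\cdot 3^{s}r^{-s}$, while the covering balls $B(y_k,r)$ yield $n\ge C^{-1}r^{-s}$. Sandwiching the covering number $N(X,r)$ between these two constant multiples of $r^{-s}$ gives $\dim_B X=s$.

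Next I would invoke Theorem~\ref{thmd}, which guarantees that $E(x,l_n)$ is dense in $X$ for $\mu$-a.e.\ $x$. Combined with the elementary fact that the upper and lower box dimensions of a set coincide with those of its closure, this yields
\[\dim_B E(x,l_n)=\dim_B \overline{E(x,l_n)}=\dim_B X=s\]
for $\mu$-a.e.\ $x$, which is the assertion of the corollary.

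I do not anticipate any genuine obstacle here; the argument is structurally identical to that of Corollary~\ref{cor1}. The only point worth verifying carefully is that the m.m.p.s.\ exponential mixing condition stated in Section~3 does indeed yield, via the $T$-invariance of $\mu$, the process-level exponential mixing required in Section~1, so that Theorem~\ref{thmd} may be genuinely applied to the orbit process $\{T^{n-1}x\}_{n\ge 1}$.
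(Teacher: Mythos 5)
Your proposal is correct and follows essentially the same route as the paper: the paper proves Corollary~\ref{cor5} by observing that the orbit process $T^{n-1}x$ is stationary and exponentially mixing in the sense of Section~1, and then running the proof of Corollary~\ref{cor1} (maximal $r$-separated sets plus Ahlfors regularity to get $\dim_{\rm{B}} X=s$, then density of the covering set to transfer this to $E$). If anything, your explicit appeal to the closure-invariance of box dimension together with Theorem~\ref{thmd} is a cleaner justification of the final step than the paper's own citation of Theorem~\ref{thm2} in the proof of Corollary~\ref{cor1}, where density is what is actually needed.
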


\begin{thm}\label{thmt}
	Let $(X,\mathcal{B},\mu,T,d)$ be an exponentially mixing m.m.p.s. and $\mu$ be Ahlfors s-regular. Therefore $E(x,l_n)$ is a residual set and moreover of second category for $\mu$-almost all $x\in X$. In particular, $\dim_P E(x,l_n)=s$ a.e..
\end{thm}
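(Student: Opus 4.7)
The plan is to mimic the proof of Theorem~\ref{thm4} almost verbatim, with Theorem~\ref{thmd} playing the role of Theorem~\ref{thm3} in the dynamical setting. Fix an $x\in X$ for which $E(x,l_n)$ is dense in $X$; by Theorem~\ref{thmd} this holds for $\mu$-almost every $x$.

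To show $E(x,l_n)$ is residual, I would decompose the complement as
\[
E(x,l_n)^{\mathrm{C}} \;=\; \bigcup_{N\ge 0} F_N,\qquad F_N := \bigcap_{n\ge N} B(T^n x,l_n)^{\mathrm{C}},
\]
and argue that each $F_N$ is nowhere dense: if the closure of some $F_{N_0}$ contained a ball $B(y,r)$ with $r>0$, then $B(y,r)$ would be disjoint from $\bigcup_{n\ge N_0}B(T^n x,l_n)$, contradicting the density of $E(x,l_n)\subset \bigcup_{n\ge N_0}B(T^n x,l_n)$. Thus $E(x,l_n)^{\mathrm{C}}$ is of first category and $E(x,l_n)$ is residual. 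Since $X$ is compact it is complete by Lemma~\ref{lem1}, so Baire's category theorem yields that $X$ is of second category; combined with $E(x,l_n)^{\mathrm{C}}$ being of first category, this forces $E(x,l_n)$ itself to be of second category.

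For the packing dimension, Remark~\ref{rem1}(iii) gives $\dim_{\rm{P}} E(x,l_n)\le \dim_{\rm{P}} X=s$, and for the reverse inequality I would invoke the same principle used at the end of Theorem~\ref{thm4}, namely that a residual (hence second-category) subset of the Ahlfors $s$-regular complete metric space $X$ has packing dimension equal to that of $X$. The main point is really the density/nowhere-dense reduction above; once Theorem~\ref{thmd} is available, no genuinely new obstacle arises, and the only care needed is to align the $\mu$-full-measure set of $x$ coming from Theorem~\ref{thmd} with the argument, which is automatic because every subsequent step works pointwise once $E(x,l_n)$ is known to be dense.
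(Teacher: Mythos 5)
Your proposal is correct and follows essentially the same route as the paper: the paper obtains Theorem~\ref{thmt} exactly as in the proof of Theorem~\ref{thm4}, writing $E(x,l_n)^{\mathrm{C}}=\bigcup_N\bigcap_{n\ge N}B(T^nx,l_n)^{\mathrm{C}}$, showing each closed piece is nowhere dense via the density statement (Theorem~\ref{thmd} in place of Theorem~\ref{thm3}), and then invoking completeness of $X$ (Lemma~\ref{lem1}) with Baire's theorem and the fact that a residual subset of the Ahlfors $s$-regular space has packing dimension $s$. Your handling of the $\mu$-full set of $x$ and of the packing-dimension step matches the paper's level of detail, so nothing further is needed.
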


\begin{rem}\label{rem1}
         Now we give some systems which are exponentially mixing. 
         \begin{enumerate}[(i)]
        \item For the doubling map $Tx=2x$ $(\bmod\thinspace1)$ on the interval $[0,1)$, Gibbs measure $\mu$ associated to H\"older potentials is exponentially mixing. 
        \item For the $\beta$-shift $T_{\beta}x=\beta x$ $(\bmod\thinspace1)$ on the interval $[0,1)$, the Parry measure is exponentially mixing.
        \item  For the Gauss map $Sx=\{\frac{1}{x}\}$ $(\bmod\thinspace1)$ on the interval $[0,1)$, the Gauss measure is exponentially mixing. 
        \end{enumerate}
	
\end{rem}

\subsection*{Acknowledgements}
This work is  supported by  NSFC 11671151 and Guangdong Natural Science Foundation 2018B0303110005.

\end{document}